\theoremstyle{definition}
\newtheorem{exa}{Example}[section]
\theoremstyle{plain}
\newtheorem{thm}{Theorem}[section]
\newtheorem{prop}{Proposition}[section]
\newtheorem{cor}{Corollary}[section]
\theoremstyle{remark}
\newtheorem*{rmk}{Remark}
\newtheorem*{notation}{Notation}
\DeclareMathOperator{\dist}{dist}
\newcommand{\R}{\mathbb{R}}
\newcommand{\T}{\mathbb{T}}
\newcommand{\A}{\mathbb{A}}
\newcommand{\II}{\mathcal{I}}
\title{An Incidence Result for Well-Spaced Atoms in all dimensions}
\author{Peter Bradshaw}
\begin{document}

\maketitle

\begin{abstract}
    We prove an incidence result counting the $k$-rich $\delta$-tubes induced by a well-spaced set of $\delta$-atoms. Our result coincides with the bound that would be heuristically predicted by the Szemer\'edi--Trotter Theorem and holds in all dimensions $d \geq 2$.
\end{abstract}

\section{Introduction}
Incidence geometry is concerned with counting incidences between various geometric objects, traditionally points and lines. Given a set of lines $L$ in $\R^2$, the set of points through which at least $k$ lines pass is denoted by $N_k(L)$. Then for $k\geq 2$, the classical Szemer\'edi--Trotter Theorem \cite{SzT} bounds sharply $|N_k(L)|$: 
\[ |N_k(L)| \ll |L|^2 k^{-3} + |L| k^{-1}. \]
By duality, the above bound also holds if the roles of points and lines are interchanged.
Given a set $P$ of points and a set $L$ of lines in $\R^2$, the set of incidences is defined to be $\II(P,L) := \{(p,l) \in P\times L: p\in l\}$. An equivalent formulation of the Szemer\'edi--Trotter Theorem states that
\begin{equation}\label{sztr}
  \II(P,L) \ll (|P||L|)^{2/3} + |P| + |L|.  
\end{equation}

In 2019, Guth, Solomon and Wang proved that given a small $\delta$ and for a suitably well-spaced set $L$ of tubes of thickness $\delta$ in $[0,1]^2$, an analogue of the Szemer\'edi--Trotter Theorem holds \cite{tubes}. Furthermore, they proved a similar result in $[0,1]^3$ which is an analogue of the seminal Guth--Katz bound \cite{PHSThm}. Both bounds are essentially sharp.  

As in \cite{tubes}, our objects of interest will be small $\delta$-atoms and thin $\delta$-tubes. A $\delta$-atom is a closed ball in $[0,1]^d$ of diameter $\delta$ and a $\delta$-tube is the set of all points in $[0,1]^d$ which are within a distance of $\delta/2$ from some fixed line.
Unlike the discrete setting of points and lines, we need to carefully define what it means for two atoms or two tubes to be distinct. 
Two $\delta$-atoms are distinct if they do not intersect each other. Two $\delta$-tubes are considered distinct if either:
\begin{itemize}
    \item They do not intersect each other, or;
    \item The angle between them is $>\delta$.
\end{itemize}
We say that an atom and a tube are \emph{incident} with each other if they have a non-empty intersection.

These assumptions are sensible because they avoid degenerate situations such as many atoms being incident with the exact same set of tubes, or many tubes being incident with the exact same set of atoms.

However, imposing these assumptions is still insufficient for proving a result similar to the Szemer\'edi--Trotter Theorem. The most fundamental property of points and lines which is utilised in proving discrete incidence results is that two points lie on at most one line and two lines intersect in at most one point.
However, this does not hold for atoms and tubes, and constitutes one of the most important differences between the two cases. In fact if two $\delta$-atoms in $[0,1]^d$ are separated by a distance of $x$ where $\delta \ll x < 1$, then there exist $\sim x^{1-d}$ distinct tubes which are incident to both of them.

Let $T_k(A)$ be the set of $k$-rich tubes induced by the $\delta$-atoms $A$. 
The following example illustrates that without further assumption on the distribution of atoms, useful bounds cannot be obtained.
Suppose $A$ is a set of $k^2$ $\delta$-atoms arranged next to each other in a $k\times k$ square grid in one corner of $[0,1]^2$.
It is relatively clear that $|T_k(A)| = k\delta^{-1} = \delta^{-1}\cdot \frac{|A|^2}{k^3}$. Since $\delta^{-1}$ can be arbitrarily large, the result is not useful.

We will show that if $A$ is well-distributed in some sense, then a bound for $|T_k(A)|$ can be obtained which essentially depends only on $|A|$ and $k$.
Our methods which are inspired by \cite{tubes} allow us to bound $|T_k(A)|$, where $A$ is a well-distributed (or well-spaced) set of atoms in $[0,1]^d$. In this context, well-distributed means that the atoms almost lie in a grid.

\begin{thm}\label{thm}
Let $A$ be a family of $W^{d}$ $\delta$-atoms in $[0,1]^d$, with $1<W<\delta^{-1}$, such that every $W^{-1}\times \dots\times W^{-1}$ cube contains $O(1)$ $\delta$-atoms from $A$. 
Let $k \geq 2$.
Then for every $\epsilon > 0$ there exist $C_1(\epsilon,d)$ and $C_2(\epsilon,d)$ such that if
\begin{equation}\label{condition}
    k\geq C_1(\epsilon, d)\delta^{-\epsilon}\cdot \delta^{d-1}|A|,
\end{equation}
then
\begin{equation}\label{result}
    |T_k(A)| \leq C_2(\epsilon,d) \delta^{-\epsilon} \cdot \frac{|A|^2}{k^3}.
\end{equation}
\end{thm}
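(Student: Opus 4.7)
The approach is to adapt the multi-scale induction strategy of Guth, Solomon and Wang. I would proceed by induction on the number of atoms $|A| = W^d$; the two-dimensional case is handled directly by the main theorem of \cite{tubes}, while the multi-scale step below runs only for $d \geq 3$.

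For the inductive step, fix a dyadic intermediate scale $\rho \in (\delta,1)$, say $\rho = 1/2$, and partition $[0,1]^d$ into $\rho^{-d}$ axis-parallel sub-cubes of side length $\rho$. For each $k$-rich tube $T$, the $\geq k$ incident atoms are distributed among the $O(\rho^{-1})$ sub-cubes that $T$ traverses, so pigeonhole supplies a sub-cube $Q_T$ in which $T$ is $\gtrsim k\rho$-rich. Assigning each $T \in T_k(A)$ to such a $Q_T$ and then pigeonholing across the $\rho^{-d}$ sub-cubes, one obtains a distinguished sub-cube $Q^*$ receiving $\gtrsim \rho^d\,|T_k(A)|$ tubes.

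Rescale $Q^*$ to $[0,1]^d$ by a factor of $\rho^{-1}$. Under this dilation, $A\cap Q^*$ becomes at most $O((W\rho)^d)$ many $(\delta/\rho)$-atoms, well-spaced at scale $(W\rho)^{-1}$ since the original $W^{-1}$-grid rescales exactly to the $(W\rho)^{-1}$-grid; each tube assigned to $Q^*$ becomes a $(\delta/\rho)$-tube that is $(k\rho)$-rich; and distinct tubes remain distinct in the rescaled angular sense. A short check shows that the hypothesis \eqref{condition} is preserved under the substitution $(\delta, W, k) \mapsto (\delta/\rho,\, W\rho,\, k\rho)$ because $\rho^{\epsilon} \leq 1$. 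Invoking the inductive hypothesis in the rescaled picture---which has strictly fewer atoms---yields
\[
\rho^d\,|T_k(A)| \;\leq\; C_2(\epsilon,d)\,\left(\frac{\delta}{\rho}\right)^{-\epsilon}\,\frac{(W\rho)^{2d}}{(k\rho)^3},
\]
and rearranging gives $|T_k(A)| \leq C_2(\epsilon,d)\,\delta^{-\epsilon}\,\rho^{d+\epsilon-3}\,|A|^2/k^3$. For $d \geq 3$ the exponent $d+\epsilon-3$ is non-negative, so $\rho^{d+\epsilon-3} \leq 1$ and the induction closes with the same $C_2$.

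The principal obstacle will be managing the implicit constants cleanly: the two pigeonholes and the counting of tubes across sub-cubes each introduce multiplicative losses which must not compound across the $O(\log W)$ iterations of the induction. This is precisely the role of the $\delta^{-\epsilon}$ slack in \eqref{result}, which absorbs polylogarithmic losses into a genuine $\epsilon$-loss. Verifying that the rescaled configuration retains the $O(1)$-per-$(W\rho)^{-1}$-cube density, and that the angular notion of tube-distinctness survives restriction and rescaling, are further details requiring care but should not pose a fundamental difficulty.
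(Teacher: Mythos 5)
There is a genuine gap, and it sits exactly at the two points you wave off at the end. First, the tube-collapse issue is not a benign detail: two $\delta$-tubes that are distinct in the paper's sense (disjoint, or at angle $>\delta$) can, after restriction to $Q^*$ and dilation by $\rho^{-1}$, become non-distinct $(\delta/\rho)$-tubes -- any two tubes meeting $Q^*$ at angle between $\delta$ and $\delta/\rho$ do this. A bush of up to $\sim\rho^{-(d-1)}$ distinct $k$-rich $\delta$-tubes through a common rich $\delta\times\rho$ segment of $Q^*$ all collapse onto essentially one rescaled tube, so the correct reduction is $\rho^{d}|T_k(A)| \le M\,|T_{k\rho}(A_{Q^*})|$ with a multiplicity $M\sim\rho^{-(d-1)}$. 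Inserting this into your own computation changes the closing factor from $\rho^{\,d+\epsilon-3}$ to $\rho^{\,\epsilon-2}>1$: the step loses a factor $\sim\rho^{-2}$ in \emph{every} dimension, so the induction cannot close with the same $C_2$, and this is not fixable by choosing $\rho$ differently since the exponent deficit is independent of $\rho$.

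Second, even ignoring the collapse, your constant bookkeeping fails. With $\rho=1/2$ and $d=3$ the gain $\rho^{\,d+\epsilon-3}=\rho^{\epsilon}$ is essentially $1$, while each step necessarily loses fixed dimensional constants: a tube meets $\sim d$ subcubes, so the best subcube is only $\gtrsim k/d$-rich (costing $d^{3}$ in the $k^{-3}$ bound), the $O(1)$ density constant enters squared via $|A\cap Q^*|\le O((W\rho)^d)$, etc. These cannot be absorbed into ``the same $C_2$,'' and letting them accumulate instead gives a factor $C^{\#\text{steps}}$ with $\#\text{steps}\sim\log_2 W$, i.e.\ a loss $W^{O(1)}$, which can be as large as a power of $\delta^{-1}$ -- not the polylogarithmic loss you claim the $\delta^{-\epsilon}$ slack absorbs. (You also never supply the actual base cases of your induction: dimension does not decrease in your step, so the $d=2$ theorem of Guth--Solomon--Wang is not a base case; what must be handled is small $W$, and the regime where the rescaled $\delta/\rho$ approaches the rescaled spacing $(W\rho)^{-1}$.) The paper's proof avoids both problems by never pigeonholing down to a single cell: it partitions into $D^{d}$ cells with $D=\delta^{-c^2\epsilon^2}$, keeps the contribution of \emph{all} cells via the $L^2$-sum of tubechen weights, controls the global count with the Fourier-analytic weighted incidence bound of Proposition~\ref{prop} (whose second, thickened term is precisely how the would-be collapse multiplicity is handled), and chooses $S,D$ as small powers of $\delta^{-1}$ so that the per-step gains are powers of $\delta^{\epsilon^2}$-type quantities that dominate every implicit constant.
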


Given a general set $T$ of tubes, $\mathcal{I}(A,T)$ will be the number of incidences between atoms from $A$ and tubes from $T$.
 Concretely,
\[\mathcal{I}(A,T) = |\{(a,t)\in A \times T: a\cap t \neq \emptyset\}|.\]
We can obtain an equivalent formulation of Theorem \ref{thm} in terms of incidences.
Firstly order the tubes $T$ by decreasing richness so that each tube $t_i\in T$ is $k_i$-rich and $\{k_i\}_{i\geq 1}$ is decreasing. Then rearranging \eqref{result} and summing over all $k_i$ gives the following:

\begin{cor}\label{cor}
Let $A$ be a family of $W^{d}$ $\delta$-atoms in $[0,1]^d$, with $1<W<\delta^{-1}$, such that every $W^{-1}\times \dots\times W^{-1}$ cube contains $O(1)$ $\delta$-atoms from $A$. 
Let $T$ be an arbitrary set of $\delta$-tubes.
Then for every $\epsilon > 0$, there exists $C_3(\epsilon,d)$, such that
\begin{equation}\label{inc_result}
    \II(A,T) \leq C_3(\epsilon,d) \delta^{-\epsilon}(|A|^{2/3}|T|^{2/3} + k_0(A,\delta)|T|),
\end{equation}
where $k_0(A,\delta) := \max\{1,\delta^{d-1}|A|\}$.
\end{cor}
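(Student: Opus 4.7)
The plan is a standard dyadic-rearrangement argument that converts the $k$-rich bound of Theorem \ref{thm} into a total incidence count, precisely mirroring how \eqref{sztr} is deduced from the Szemer\'edi--Trotter bound on $|N_k(L)|$. As the remark preceding the statement suggests, one orders the tubes of $T$ as $t_1,t_2,\ldots$ by decreasing richness $k_1\geq k_2\geq\cdots$. Since $t_i$ is itself a $k_i$-rich tube and every preceding tube is at least as rich, one has $i \leq |T_{k_i}(A)|$. Whenever $k_i$ satisfies the hypothesis \eqref{condition}, applying Theorem \ref{thm} at richness $k_i$ gives
\[
    i \;\leq\; C_2(\epsilon,d)\delta^{-\epsilon}\,\frac{|A|^2}{k_i^3},\qquad \text{hence}\qquad k_i \;\leq\; (C_2\delta^{-\epsilon})^{1/3}|A|^{2/3}i^{-1/3}.
\]

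Set the cutoff $K := \max\bigl(2,\,C_1(\epsilon,d)\delta^{-\epsilon}\delta^{d-1}|A|\bigr)$, which is at most a constant multiple of $\delta^{-\epsilon}k_0(A,\delta)$, and let $i^*$ denote the largest index with $k_{i^*}\geq K$ (taking $i^*=0$ if no such index exists). Then split
\[
    \mathcal{I}(A,T) \;=\; \sum_{i=1}^{|T|} k_i \;=\; \sum_{i\leq i^*} k_i \;+\; \sum_{i>i^*} k_i.
\]
For the head, the rearranged bound combined with the elementary estimate $\sum_{i=1}^{N} i^{-1/3} \lesssim N^{2/3}$ and with $i^*\leq |T|$ yields a contribution of order $\delta^{-\epsilon/3}|A|^{2/3}|T|^{2/3}$. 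For the tail, every $k_i$ with $i>i^*$ obeys $k_i < K$, so the trivial estimate gives $\sum_{i>i^*}k_i \leq K|T| \lesssim \delta^{-\epsilon} k_0(A,\delta)|T|$. Adding the two pieces and using $\delta^{-\epsilon/3} \leq \delta^{-\epsilon}$ to absorb everything into one constant $C_3(\epsilon,d)\delta^{-\epsilon}$ produces \eqref{inc_result}.

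There is no genuine obstacle here once Theorem \ref{thm} is in hand; the proof is essentially mechanical. The only minor items requiring attention are: choosing the cutoff $K$ so that \eqref{condition} is met for every $i\leq i^*$; deferring tubes with $k_i=1$ to the tail bound (since Theorem \ref{thm} requires $k\geq 2$, but such tubes contribute to the trivial estimate harmlessly); and checking that $K \lesssim \delta^{-\epsilon}k_0(A,\delta)$ in both regimes $\delta^{d-1}|A|\geq 1$ and $\delta^{d-1}|A|<1$, which is a direct consequence of the definition of $k_0$ and the fact that $\delta^{-\epsilon}\geq 1$.
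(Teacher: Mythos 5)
Your proposal is correct and is exactly the rearrangement-and-summation argument the paper sketches in the sentence preceding the corollary (order tubes by decreasing richness, use $i \leq |T_{k_i}(A)|$ with Theorem \ref{thm}, and handle the low-richness tubes via the trivial bound at the cutoff determined by \eqref{condition}). Your version simply spells out the cutoff $K$ and the verification that it is $\lesssim \delta^{-\epsilon}k_0(A,\delta)$, which the paper leaves implicit.
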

The term $k_0(A,\delta)|T|$ in \eqref{inc_result} plays the same role as the $|L|$ term in the Szemer\'edi--Trotter bound \eqref{sztr}, namely counting the incidences from lines incident to only one point.

\begin{rmk}
The condition \eqref{condition} on $k$ is necessary and has a specific meaning. If the atoms in $A$ were randomly placed, then a simple calculation verifies that the expected richness of any $\delta$-tube is $\delta^{d-1}|A|$. Suppose $k$ is less than this threshold. Probabilistic arguments show that there exist configurations of atoms $A$ such that a positive proportion of all tubes are $k$-rich, so non-trivial results cannot be found. 
\end{rmk}

If $d = 2$, Theorem \ref{thm} follows by duality from the result in \cite{tubes} and is essentially optimal. When $d\geq 3$, the conjectured bound would have $k^3$ replaced with $k^{d+1}$ in the denominator of \eqref{result}, but any improvement towards this appears not to be amenable to this method. In the discrete setting, our result can be obtained in any dimension $d\geq 3$ by projecting generically onto a plane and applying the Szemer\'edi--Trotter Theorem. However, this is not possible in the thickened setting because projecting atoms into a plane will not preserve the well-distributed property of the set of atoms.

\begin{rmk}
It should be noted that it doesn't matter if we consider our atoms to be $d$-dimensional spheres or cubes or some (slightly) more exotic shape. There exist constants $C,c$ such that a $\delta$-cube is contained in a $C\delta$-sphere, and also contains a $c\delta$-sphere. Since we are primarily interested in growth rate, the introduced constants are of no consequence. 
During our proof, we will partition the space $[0,1]^d$ into ``cells'' which is most natural if we view our cells as smaller cubes.
One important upshot is that all equalities in this paper are implicitly up to an absolute constant.  None of these constants are problematically large or small.
\end{rmk}

\begin{notation}
We use Vinogradov's symbol extensively. We write $f(n) \ll g(n)$ to mean that there exists a constant $C$
such that $f(n) \leq Cg(n)$ for large $n$. 
Similarly is defined $\ll_{\alpha, \beta}$, but in this case the constant $C := C(\alpha,\beta)$ may depend on $\alpha$ and $\beta$.
\end{notation}

\begin{notation}
Throughout this paper, we say a tube is $k$-\emph{rich} if it passes through $k$ atoms.
We let $T_k(A)$ be the set of all $k$-rich tubes.
We will often suppress the notation to $T_k$ when context makes it clear what the set $A$ is. 
\end{notation}

\noindent{\bf Acknowledgement:} The author would like to thank Misha Rudnev for his invaluable help and guidance throughout the preparation of this paper.

\section{A general incidence result}

To assist in our proofs, we will mostly be working with incidence counts rather than directly with $k$-rich tubes. Furthermore, we allow for \emph{weighted} sets of atoms and tubes.
Let $A$ be a set of atoms where each $a\in A$ has a positive integer weight $w(a)$ associated with it.
This essentially means that when counting incidences, the atom $a$ appears $w(a)$ times.
Similarly define weight for sets of tubes.

For a set of weighted atoms $A$ with weight function $w$, and a set of weighted tubes $T$ with weight function $\omega$, we define a more general incidence counting function
\[ \II(A,T) := \sum_{a\in A}\sum_{t\in T} w(a) \omega(t) \mathbb{I}_{\{a \cap t \neq \emptyset\}}. \]

The following incidence result has the same role as Proposition 2.1 in \cite{tubes} and our proof is a modification of theirs. This result is necessary in the proof of Theorem \ref{thm} because it can be used for any sets of atoms and tubes, even if they are not well-spaced.

\begin{prop}\label{prop}
Let $k \geq 1$ and $A$ be a set of distinct weighted $\delta$-atoms in $[0,1]^d$ with weight function $w$. Let $T$ be a set of distinct (not weighted) $\delta$-tubes. Let $S \in (1,\delta^{-1})$. Then 
\begin{equation}\label{propeq}
    \II(A,T) \ll \left(S\delta^{-(d-1)}|T|\sum_{a\in A}w(a)^2\right)^{1/2} + S^{1-d}\II(A^S,T^S),
\end{equation}
where $A^S$ and $T^S$ are respectively the weighted sets of atoms and tubes formed by thickening $A$ and $T$ by a factor of $S$.
\end{prop}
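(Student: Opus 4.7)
The plan is to apply Cauchy--Schwarz over the tube set and split the resulting pair sum according to a partition of $[0,1]^{d}$ into $S\delta$-cubes. Write $\II(A,T)=\sum_{t\in T}I(t)$ with $I(t)=\sum_{a:\,a\cap t\neq\emptyset}w(a)$. Then Cauchy--Schwarz yields
\[
\II(A,T)^{2}\le|T|\sum_{t}I(t)^{2}=|T|\sum_{a,a'\in A}w(a)w(a')\tau(a,a'),
\]
where $\tau(a,a')=|\{t\in T:\,a\cap t,\,a'\cap t\neq\emptyset\}|$. Let $B(a)$ denote the $S\delta$-cube of the partition containing $a$, and split the pair sum into a \emph{close} part with $B(a)=B(a')$ and a \emph{far} part with $B(a)\neq B(a')$.

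For the close part, set $m(B,t)=\sum_{a\subset B,\,a\cap t}w(a)$. Because at most $O(S)$ disjoint $\delta$-atoms fit on a $\delta$-tube inside any single $S\delta$-cube (the relevant tube segment has length $O(S\delta)$), Cauchy--Schwarz on the $\le S$ terms gives $m(B,t)^{2}\ll S\sum_{a\subset B,\,a\cap t}w(a)^{2}$. Summing over $(B,t)$ and using $|\{t:\,a\cap t\neq\emptyset\}|\ll\delta^{1-d}$ gives
\[
\sum_{a,a':\,B(a)=B(a')}w(a)w(a')\tau(a,a')=\sum_{t,B}m(B,t)^{2}\ll S\delta^{1-d}\sum_{a}w(a)^{2},
\]
so the close-pair contribution to $\II(A,T)^{2}$ is $\ll S\delta^{-(d-1)}|T|\sum_{a}w(a)^{2}$, the square of the first term of \eqref{propeq}.

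For the far part, reorganise the sum over $t$ by its containing $S\delta$-tube $t^{S}(t)$. The key geometric input is that inside any $S\delta$-tube at most $O(S^{d-1})$ distinct $\delta$-tubes can pass through a prescribed $\delta$-atom, since fixing the atom localises direction to a $\delta$-angular cone. I would then apply a second Cauchy--Schwarz, weighted by the number of $\delta$-tubes inside each $t^{S}$ so that $\II(A^{S},T^{S})$ appears with the correct multiplicity, aiming to show
\[
\sum_{a,a':\,B(a)\neq B(a')}w(a)w(a')\tau(a,a')\ll \frac{S^{2(1-d)}\II(A^{S},T^{S})^{2}}{|T|}.
\]
Multiplying by $|T|$ then bounds the far-pair contribution to $\II(A,T)^{2}$ by $\ll Y^{2}$, where $Y=S^{1-d}\II(A^{S},T^{S})$. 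Combining with the close part gives $\II(A,T)^{2}\ll X^{2}+Y^{2}$, and $\sqrt{X^{2}+Y^{2}}\le X+Y$ yields \eqref{propeq}. The main obstacle is this far-pair estimate: a direct Cauchy--Schwarz loses a factor of $S^{d-1}$ and only produces the weak bound $\II(A,T)\ll\II(A^{S},T^{S})$, so one must carefully balance the combinatorial capacity $O(S^{2(d-1)})$ of each $S\delta$-tube against the $O(S^{d-1})$ per-atom direction constraint to recover the correct $S^{1-d}$ scaling.
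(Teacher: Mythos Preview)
Your close-pair argument is correct and yields the first term cleanly. The gap is in the far-pair step: the inequality you are aiming for,
\[
\sum_{\substack{a,a'\in A\\ B(a)\neq B(a')}} w(a)w(a')\,\tau(a,a')\;\ll\;\frac{S^{2(1-d)}\,\II(A^{S},T^{S})^{2}}{|T|},
\]
is simply false. Take $d=2$, weights $\equiv 1$, let $T$ consist of a single $\delta$-tube along the $x$-axis, and place $N$ atoms on that axis with spacing $2S\delta$ (so all lie in distinct $S\delta$-cubes). Then the left side equals $N(N-1)$, while $\II(A^{S},T^{S})=N$ and $|T|=1$, so the right side is $S^{-2}N^{2}$. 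For $S$ large this fails by a factor $\sim S^{2}$. The proposition itself survives in this example because the first term already dominates, but your proposed route to the second term does not.

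The structural reason is that once you apply Cauchy--Schwarz over $T$ at the outset, you have committed to bounding $\sum_{t}I(t)^{2}$, and this $L^{2}$ quantity has no reason to be controlled by $\II(A^{S},T^{S})^{2}/|T|$. The paper avoids this by making the dichotomy \emph{before} any Cauchy--Schwarz: writing $\II(A,T)\sim\int fg$ with $f=\sum_{a}w(a)\chi_{a}$, $g=\sum_{t}\chi_{t}$, one either has $\int fg\ll\int(f\ast h)(g\ast h)$, which is exactly $\II(A,T)\ll S^{1-d}\II(A^{S},T^{S})$, or one may subtract the smoothed part and bound $\int f\bigl(g-g\ast h\bigr)$. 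Only then is Cauchy--Schwarz applied, in frequency space, and the factor $1-\eta$ (vanishing on the ball $|\xi|\lesssim S^{-1}$) is what produces the gain: for $|\xi|\gtrsim S^{-1}$ only $O(S\delta^{-(d-2)})$ direction caps $\theta$ contribute to $\hat g(\xi)$, giving $\int|\hat g|^{2}(1-\eta)^{2}\ll S\delta^{-(d-1)}|T|$.

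If you want a physical-space version of this, the correct analogue is a high--low decomposition: replace $I(t)$ by $I(t)-\bar I(t)$, where $\bar I(t)$ is a local average over the $S\delta$-tube containing $t$, and apply Cauchy--Schwarz to the difference. The subtraction is what kills the far-pair contribution; splitting pairs by distance after a raw Cauchy--Schwarz cannot.
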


\begin{proof}
We scale the problem by $\delta^{-1}$, so that the atoms are now $1$-atoms in $[0,\delta^{-1}]^d$. This will be more convenient to work with.

For any $a \in A$ and any $t\in T$, let $\chi_a(x)$ and $\chi_t(x)$ respectively be the indicator functions for the atom $a$ and the tube $t$. Now let $f(x) := \sum_{a\in A} w(a)\chi_a(x)$ and $g(x) := \sum_{t\in T} \chi_t(x)$. 

Let $h$ be $S^{-d}$ times a smooth $C^\infty$ function which approximates the indicator function for an $S$-atom with centre at the origin. The precise function $h$ is chosen so that its Fourier transform is supported on a box and decays rapidly outside this box. The importance of multiplying by $S^{-d}$ is that it ensures that the value of an integral of a function is not changed by convolving with $h$. In fact, convolving $f$ with $h$ thickens atoms by a factor of $S$ and flattens them by a factor of $S^{-d}$, and convolving $g$ with $h$ thickens the tubes by $S$ (and makes them slightly longer, but this is irrelevant) and flattens them by a factor of $S^{-(d-1)}$.

Counting the incidences, we get
\[\II(A,T) \sim \int_{[0,1]^d} f(x)g(x)dx. \]

We thicken all of the $1$-tubes to get weighted $S$-tubes. (They are weighted because some tubes may coincide after thickening.) This thickening procedure corresponds to convolving $g$ with $h$. 
Also note that whether we thicken atoms, tubes, or both atoms and tubes, the effect on the incidence count is the same. That is
\[\int (f \ast h)(x)g(x)dx = \int f(x)(g\ast h)(x)dx = \int (f\ast h)(x)(g\ast h)(x)dx.\]

Firstly suppose that $\int fg \ll \int(f\ast h)(g\ast h)$. This is clearly equivalent to $\II(A,T) \ll S^{1-d} \II(A^S,T^S)$, proving one half of the bound.

Otherwise we have
$\int fg \gg \int f(g\ast h)$, and hence
\[\II(A,T) = \int fg \ll \int fg-f(g\ast h).\]

We know by Plancharel's theorem that $\int f(x)g(x)dx = \int \hat{f}(\xi)\hat{g}(\xi)d\xi$. 
Since convolution becomes multiplication when we pass to the Fourier transform, it follows that
\[\II(A,T) = \int f (g-(g\ast h)) = \int \hat{f} \overline{\hat{g}} (1-\eta),\]
where $\eta$ is $1$ in an $S^{-1}$-ball around the origin, and decays quickly outside.
Using the Cauchy--Schwarz inequality, we get
\begin{equation} \label{cs_step}
\int \hat{f}(\xi)\hat{g}(\xi) (1-\eta(\xi))d\xi \leq \left(\int |\hat{f}(\xi)|^2 d\xi\right)^{1/2} \left(\int |\hat{g}(\xi)|^2 (1-\eta(\xi))^2 d\xi\right)^{1/2}.    
\end{equation}

Using Parseval's identity, the first term on the right-hand side can be evaluated as
\[\left(\int |\hat{f}(\xi)|^2 d\xi\right)^{1/2} = \left(\int |f(x)|^2 dx\right)^{1/2} = \left(\sum_{a\in A} w(a)^2\right)^{1/2}.\]

We now estimate the second term in \eqref{cs_step}. We partition the surface of $[0,1]^d$ into small $\delta$-caps as a way of sorting tubes in $T$ by direction. Let $T_\theta$ be the set of all $k$-rich tubes in the direction of $\delta$-cap $\theta$, and let $g_\theta = \sum_{t \in T_\theta} \chi_t$. 
If $t$ is a $1$-tube passing through the origin in direction $\theta$, 
then its Fourier transform is roughly $\delta^{-1}$ times the indicator function of a $1\times \dots \times 1 \times \delta$ slab orthogonal to $t$ with centre at the origin.

We assume that $\xi \geq S^{-1}$, as the integral is otherwise zero. For a fixed $\xi$, there is at most one tube in each direction such that $\hat{\chi_t}(\xi)$ is nonzero. Furthermore, simple geometric arguments show that
the contribution of $\hat{g_\theta}(\xi)$ to $\hat{g}(\xi)$ is only nontrivial for at most $\ll S\delta^{-(d-2)}$ different $\theta$ values. We then apply the Cauchy--Schwarz inequality to get
\[ (1-\eta(\xi))^2|\hat{g}(\xi)|^2 = (1-\eta(\xi))^2|\sum_\theta \hat{g_\theta}(\xi)|^2 \ll S\delta^{-(d-1)} \sum_{\theta} |\hat{g_\theta}(\xi)|^2.\]
Again using Parseval's identity, it follows that
\[\int |\hat{g}(\xi)|^2 (1-\eta(\xi))^2 d\xi \ll S\delta^{-(d-2)} \sum_\theta \int |\hat{g_\theta}(\xi)|^2 = S\delta^{-(d-2)} \sum_\theta \int |g_\theta(x)|^2 = S\delta^{-(d-1)}|T|.\]
Substituting into \eqref{cs_step} yields
\[\II(A,T) \ll \left(S\delta^{-(d-1)}|T|\sum_{a\in A}w(a)^2\right)^{1/2},\]
the other half of the bound.
\end{proof}

The dominant term in \eqref{propeq} is determined based on whether the incidence count increases disproportionately after thickening by $S$. The following two examples give configurations of atoms and tubes which attain both bounds in Proposition \ref{prop}, demonstrating that it is sharp up to a factor of $S$. For the purpose of these examples, $A$ will not be a weighted set of atoms. 

\begin{exa}
If $A \subset [0,1]^d$ consists of all the $\delta$-atoms in a $d$-dimensional box with side length $k\delta$, then $|A| = k^d$. If $T$ is the set of induced $k$-rich $\delta$-tubes, it can be shown that $|T| = \delta^{-(d-1)}k^{d-1}$. 
Further calculations show that 
\[ (S\delta^{-(d-1)} |A||T|)^{1/2} = S^{1/2} \delta^{-(d-1)} k^{d-\frac1{2}} \qquad \text{and} \qquad S^{1-d}\II(A^S, T^S) = \delta^{-(d-1)}k^d.\]
Also since all tubes in $T$ are $k$-rich, we have
\[ \II(A,T) = \delta^{-(d-1)}k^d,\]
so the second term in \eqref{propeq} is the attained bound (for a small enough choice of $S$).
\end{exa}

\begin{exa}
If $A \subset [0,1]^d$ consists of a $(d-1)$-dimensional slice of the above configuration of $\delta$-atoms, then we have $|A| = k^{d-1}$. Again let $T$ be the set of induced $k$-rich $\delta$-tubes, so $|T| = \delta^{-(d-1)}k^{d-3}$. In this case
\[ (S\delta^{-(d-1)} |A||T|)^{1/2} = S^{1/2} \delta^{-(d-1)} k^{d-2} \qquad \text{and} \qquad S^{1-d}\II(A^S, T^S) = S^{-1}\delta^{-(d-1)}k^{d-2}.\]
Again, since all tubes in $T$ are $k$-rich, we have
\[ \II(A,T) = \delta^{-(d-1)}k^{d-2},\]
so the first term in \eqref{propeq} is the attained bound up to an $S^{1/2}$ factor.
\end{exa}

\section{The Main Result}

The proof of Theorem \ref{thm} combines induction with a cell partitioning argument.
Often in proofs of incidence results it is useful to partition the space into smaller cells and estimate the contribution of incidences in each cell. An illustrative example is a very short, elementary proof of the Szemer\'edi--Trotter Theorem for cartesian products using a ``lucky pairs'' argument. The prototype for this method can be found in \cite{szem-tr_cart}. 

The lucky pairs argument adapts readily to higher dimensions, and since our set of atoms $A$ in $[0,1]^d$ is nearly a $d$-fold cartesian product, we conjecture that the corresponding bound should hold, namely that $|T_k(A)| \ll |A|^2 k^{-(d+1)}$.

We again mention that the higher-dimensional version of Theorem \ref{thm} does not follow from projecting into the plane and applying the $d=2$ result, since the well-distributed assumption that $A$ is nearly grid-like is clearly violated after projection.

Our strategy is the following: We partition $[0,1]^d$ into cells of side length $D^{-1}$ for some parameter $D$ to be chosen. 
Proposition \ref{prop} with some thickening parameter $S$ allows us to relate the number of $k$-rich tubes to an incidence count, specifically the $L_2$-norm of the weights of shortened tubes in all cells. This is bounded by applying the induction hypothesis in each cell. The method is inspired by \cite{tubes}, but is different in several key ways. The exposition is also new.

For the proof to work, we need $S$ to be much smaller than $D$, and $D$ to be much smaller than $\delta^{-\epsilon}$. We also want $S$ and $D$ to be much bigger than constants. This is the motivation for the uniform choices of these parameters given in the proof.

\begin{proof}[Proof of Theorem \ref{thm}]
We treat $\epsilon$ and $d$ as constants, so in what follows $\ll$ is written to mean $\ll_{\epsilon,d}$. 
We fix $W$ and proceed by induction on $\delta$. Namely we have to prove the statement for all $\delta \in (0,W^{-1})$. The first base case will be when $\delta$ is very close to $W^{-1}$, namely when $\delta^{-(1-c\epsilon)} \leq W$ for some small fixed $c$ (we choose $c<1/(d-1)$ which assists in the following calculation).
Assuming $\delta^{-(1-c\epsilon)} \leq W$, \eqref{condition} gives
\[ k \geq C_1(\epsilon,d) W^{d - \frac{d-1-\epsilon}{1-c\epsilon}} > C_1(\epsilon,d) W.\]
Since the distribution of atoms permits $|T_k(A)|$ to be non-zero only if $k\ll W$, we can choose $C_1({\epsilon},d)$ large enough so that $|T_k(A)| = 0$, and \eqref{result} holds trivially.

The other base case is when $W$ is very small, say smaller than some constant $c$. In this case, $|T_k(A)| \leq c^{2d}$ trivially, so \eqref{result} holds for a suitable choice of $C_2(\epsilon,d)$.

We move on to the induction step. Assume the result holds for all $\delta' > K\delta$ and $W'^{-1} > KW^{-1}$, where $K$ is sufficiently small ($K = 2$ will work). 
Assume \eqref{condition} holds. 

Firstly, we split up $[0,1]^d$ into $D^d$ sub-cubes or \emph{cells}, where $D  = \delta^{-c^2 \epsilon^2}$. Let a \emph{tubechen} be the intersection of a $k$-rich $\delta$-tube with one of these cells. (A tubechen looks like a section of a $\delta$-tube of length $D^{-1}$.)
To each tubechen $t$ we associate a weight $w(t)$ which is the number of atoms from $A$ in $t$, and a multiplicity $m(t)$ which is the number of $k$-rich tubes containing the tubechen $t$. (A tubechen is ``contained'' in a tube if all the atoms on the tubechen also intersect the tube. A tubechen may lie on up to $D^{d-1}$ $k$-rich tubes.)
With this notation, it is evident that
\[ k|T_k| \leq \II(A,T_k) = \sum_{\text{tubechens }t} w(t)m(t). \]
It is also clear that 
\[\sum_{\text{tubechens }t} m(t) = D|T_k|,\]
so by the pigeonhole principle, a positive proportion of the incidences come from tubechens $t$ with $w(t) \geq k/D$. We will henceforth assume that the weights of all tubechens are at least $k/D$.

At this point we again emphasise that our choices for the shapes and sizes of atoms and tubes makes our definition of \emph{incidence} quite loose. For this reason, equations such as those in the above pigeonholing argument contain suppressed constants. We will henceforth use mostly asymptotic notation.

Now we treat separately two cases: $k \ll D$ and $k \gg D$.
The reason is that we will later apply the induction hypothesis to estimate the number of $k/D$-rich tubechens, and the induction hypothesis only holds if $k/D$ is greater than some constant. 

{\bf Case $1$: $k\ll D$.}
Since $D = \delta^{-c^2\epsilon^2}$, it follows that $\delta^{-\epsilon/2}k^{-3} \gg 1$.
Now let's count the $2$-rich tubes. For each pair of atoms $a,a'\in A$, there are $\dist(a,a')^{-(d-1)}$ tubes passing through both. Then the configuration imposed by the spacing assumptions allows us to make the following bounds:
\[|T_k(A)| \leq |T_2(A)| = \sum_{a,a' \in A} \dist(a,a')^{-(d-1)} \ll |A|^2 \log |A| \ll \delta^{-\epsilon} |A|^2 k^{-3}.\]

{\bf Case $2$: $k \gg D$.} 
We want to apply Proposition \ref{prop} using a thickening factor $S = \delta^{-c^3\epsilon^3}$,
but to do so globally is wasteful of the strong spacing assumptions on $A$, so the bound will be prohibitively weak. 
We first partition all the tubes in $[0,1]^d$ into $(D\delta)^{-2(d-1)}$ $D\delta$-tubes. 
The rationale for this partitioning is that it will make tubechens behave like weighted atoms.

If we fatten a $D\delta$-tube $\tau$ by a factor of $D^{-1}\delta^{-1}$, it becomes $[0,1]^d$. Each tubechens in $\tau$ which runs parallel to $\tau$ becomes  a $D^{-1}$-atom, and each $k$-rich $\delta$-tube in $\tau$ becomes a $D^{-1}$-tube.
Furthermore, each new atom has a weight, which is the same as the weight of the corresponding tubechen.
Call this set of new $D^{-1}$-atoms $\A_\tau$ and the set of new $D^{-1}$-tubes $\T_\tau$.

Applying Proposition \ref{prop} in each $D\delta$-tube $\tau$, and applying Cauchy--Schwarz, we get
\begin{align}
    k|T_k| = \II(A,T_k) &= \sum_{\tau} \II(\A_\tau, \T_\tau) \nonumber \\
    &\ll \sum_{\tau}\left(SD^{d-1}|\T_\tau|\sum_{a\in \A_\tau} w(a)^2\right)^{1/2} + S^{1-d} \sum_\tau \II(\A_\tau^S, \T_\tau^S) \nonumber \\
    &\leq (SD^{d-1})^{1/2} \left(\sum_\tau |\T_\tau|\right)^{1/2} \left(\sum_\tau \sum_{a\in \A_\tau} w(a)^2\right)^{1/2} + S^{1-d} \II(A^S,T_k^S) \nonumber \\
    &= (SD^{d-1})^{1/2} |T_k|^{1/2} \left(\sum_\tau \sum_{a\in \A_\tau} w(a)^2\right)^{1/2} + S^{1-d} \II(A^S,T_k^S). \label{two_cases}
\end{align}
We will now have two cases based on which term in \eqref{two_cases} dominates. 

Firstly suppose the second term dominates. 
Since there is at most one atom in each $W^{-1}$-cell, all thickened $S\delta$-atoms in $A^S$ have weight one, and hence $|A^S|= |A|$.
Also, the weights of tubes in $T^S$ are trivially bounded above by $S^{2(d-1)}$, the maximum number of $\delta$-tubes contained in an $S\delta$-tube. If $\tilde{T}_k^S$ is the underlying set of \emph{unweighted} tubes, then 
\begin{equation}\label{dumb_bound}
  \II(A^S,T_k^S) \ll S^{2(d-1)}\II(A^S,\tilde{T}_k^S).  
\end{equation}
In order to have $k|T_k| \ll S^{1-d}\II(A^S,T_k^S)$, a positive proportion of these incidences must be supported on $S\delta$-tubes which are $S^{d-1}k$-rich in atoms from $A^S$.
Furthermore, \eqref{condition} implies that
\[S^{d-1}k \geq S^{d-1} C_1(\epsilon,d)\delta^{d-1-\epsilon}|A| \geq C_1(\epsilon,d)(S\delta)^{d-1-\epsilon}|A^S| \cdot S^\epsilon,\]
so we can apply the induction hypothesis for any richness $k' \geq S^{d-1}k$.
Standard dyadic summing of the induction hypothesis implies that
\begin{equation}\label{hiding_pigeonholing}
   \II(A^S,\tilde{T}_k^S) \ll (S\delta)^{-\epsilon} \frac{|A|^2}{(S^{d-1}k)^2}. 
\end{equation}
Then combining \eqref{two_cases}, \eqref{dumb_bound} and \eqref{hiding_pigeonholing} yields
\[ k|T_k| \ll S^{1-d-\epsilon} \delta^{-\epsilon} \frac{|A|^2}{k^2}, \]
and rearranging closes the induction.

Now assume the first term in \eqref{two_cases} dominates.
After rearranging, this implies that
\begin{equation}\label{a_bound}
    |T_k| \ll \frac{SD^{d-1}\left(\sum_\tau \sum_{a\in \A_\tau} w(a)^2\right)}{k^2}.
\end{equation}
Notice that the bracketed term in the numerator is a sum over all tubechens. A suitable bound on this quantity will complete the proof.

 Having already partitioned $[0,1]^d$ into $D^{d}$ cells, we now estimate the contribution from tubechens in each cell. For any of these cells $C$, let $A_C$ be the set of atoms from $A$ which lie in $C$ and let $T_C$ be the set of tubechens in $C$.

 If we enlarge each cell $C$ to $[0,1]^d$, then the $\delta$-atoms become $D\delta$-atoms which satisfy the conditions for applying the induction hypothesis. 
 Each tubechen $t$ is now a $D\delta$-tube, and the richness of this tube, denoted by $r(t)$, is the weight of the correponding tubechen. Recall that these weights all exceed $k/D$.
 For any $m > k/D$, using \eqref{condition} we get
 \begin{align*}
    m & > C_1(\epsilon,d) \delta^{d-1-\epsilon}|A| D^{-1} \\
    & > C_1(\epsilon,d)(\delta D)^{d-1-\epsilon} (D^{-d}|A|) \\
    & = C_1(\epsilon,d)(\delta D)^{d-1-\epsilon} |A_C|,
\end{align*}
so the induction hypothesis \eqref{result} can be used in any cell $C$ to bound $|T_m(A_C)|$.
We get
\begin{align*}
    \sum_{\tau} \sum_{a\in \A_\tau} w(a)^2 &= \sum_{C} \sum_{t\in T_C} r(t)^2 \\
    &=\sum_C \sum_{\substack{{m \text{ dyadic}} \\ {m= k/D}}}^{k} m^2 |T_m(A_C)| \\
    &\ll \sum_C \sum_{\substack{{m \text{ dyadic}} \\ {m= k/D}}}^{k} (D\delta)^{-\epsilon} (|A|D^{-d})^2 m^{-1} \\
    &\ll D^d (D\delta)^{-\epsilon} (|A|D^{-d})^2 \cdot(D/k). 
\end{align*}
Substituting this into \eqref{a_bound}, and recalling that $S = \delta^{-c^3\epsilon^3}$ and $D = \delta^{-c^2\epsilon^2}$, we get
\[ |T_k(A)| \ll \delta^{-\epsilon}|A|^2 k^{-3},\]
closing the induction and completing the proof.
\end{proof}

\begin{rmk}
There is a small omission in the above proof: In Case 2, it is essential that each cell contains approximately the same number of atoms from $A$ and that they are well-distributed. This follows immediately if $D < W$. But if $D \geq W$, then $\delta$ is so ridiculously small that the $\delta^{-\epsilon}$ factor in \eqref{result} is enormous, and trivial bounds give the desired result. Concretely, if $W \leq D = \delta^{-c^2\epsilon^2}$, then since no pair of atoms can lie on more than $W$ tubes,
\[|T_k(A)| \leq W|A|^2 \leq W^4 \cdot\frac{|A|^2}{k^3} \leq \delta^{-4c^2\epsilon^2} \cdot\frac{|A|^2}{k^3} \leq \delta^{-\epsilon} \cdot\frac{|A|^2}{k^3}.\]
\end{rmk}

\begin{rmk}
In \cite{tubes}, a bound is obtained for the number of $k$-rich atoms induced by a set of well-distributed tubes in $[0,1]^3$.
Our formulation can be adapted to get the same result. Firstly notice that a tube in $[0,1]^3$ can be parametrised by an atom in $[0,1]^4$. Furthermore, if the tubes are well-distributed in $[0,1]^3$, then the corresponding set of atoms will be well-distributed in $[0,1]^4$, and the $k$-rich atoms in $[0,1]^3$ become $k$-rich slabs ($\delta$-thickened planes) in $[0,1]^4$.
Similar parametrisations have been used to relate point-line incidences with point-plane incidences in the discrete setting. See for example \cite{points_planes}.

We partition $[0,1]^3$ into $D^3$ sub-cubes and let a \emph{planechen} be the intersection of a $k$-rich slab with one of these subcubes. 
By finding a suitable estimate for the incidence contributions of all planechens, we recover a bound on the number of $k$-rich slabs which is dual to the result in \cite{tubes}.
Slightly more care must be taken in the small $k$ case.
\end{rmk}

\end{document}